\newtheorem{neu}{}[section]
\newtheorem*{Cor*}{Corollary}
\newtheorem{Thm}[neu]{Theorem}
\newtheorem{Theorem}{Theorem}
\newtheorem*{Thm*}{Theorem}
\newtheorem{Prop}[neu]{Proposition}
\newtheorem*{Prop*}{Proposition}
\theoremstyle{definition}
\newtheorem{Lemma}[neu]{Lemma}
\newtheorem*{Rmk*}{Remark}
\newtheorem{Rmk}[neu]{Remark}
\newtheorem*{Ex*}{Example}
\newtheorem*{Qu*}{Question}
\newtheorem{Def}[neu]{Definition}
\newtheorem*{Conv*}{Convention}
\newcommand{\N}{\mathbb{N}}
\newcommand{\Z}{\mathbb{Z}}
\newcommand{\R}{\mathbb{R}}
\newcommand{\pf}{\longrightarrow}
\newcommand{\wrt}{with respect to }
\newcommand{\CZ}{\mu_{\mathrm{CZ}}}
\newcommand{\im}{\mathrm{im\,}}
\newcommand{\om}{\omega}
\newcommand{\ev}{\mathrm{ev}}
\newcommand{\A}{\mathcal{A}}
\newcommand{\M}{\mathcal{M}}
\newcommand{\B}{\mathcal{B}}
\newcommand{\E}{\mathcal{E}}
\renewcommand{\L}{\mathscr{L}}
\renewcommand{\H}{\mathrm{H}}
\newcommand{\Ham}{\mathrm{Ham}}
\newcommand{\RFH}{\mathrm{RFH}}
\newcommand{\RFC}{\mathrm{RFC}}
\newcommand{\Crit}{\mathrm{Crit}}
\newcommand{\beq}{\begin{equation}}
\newcommand{\beqn}{\begin{equation}\nonumber}
\newcommand{\eeq}{\end{equation}}
\newcommand{\bea}{\begin{equation}\begin{aligned}}
\newcommand{\bean}{\begin{equation}\begin{aligned}\nonumber}
\newcommand{\eea}{\end{aligned}\end{equation}}
\newcommand{\Mp}{\mathfrak{M}}
\numberwithin{equation}{section}
\definecolor{Urs}{rgb}{0,.7,0}
\definecolor{Peter}{rgb}{0,0,1}
\definecolor{red}{rgb}{1,0,0}
\newcommand{\He}{\mathscr{H}}
\begin{document}
%%%%%%%%%%%%%%%%%%%%%%%%%%%
\title{Infinitely many leaf-wise intersections on Cotangent bundles}
\author{Peter Albers}
\author{Urs Frauenfelder}
\address{
    Peter Albers\\
    Department of Mathematics\\
    Purdue University}
\email{palbers@math.purdue.edu}
\address{
    Urs Frauenfelder\\
    Department of Mathematics and Research Institute of Mathematics\\
    Seoul National University}
\email{frauenf@snu.ac.kr}
\keywords{Rabinowitz Floer homology, leaf-wise intersections, cotangent bundles}
\subjclass[2000]{53D40, 37J10, 58J05}
\begin{abstract}
If the homology of the free loop space of a closed manifold $B$ is infinite dimensional then generically there exist infinitely many leaf-wise intersection points for fiber-wise star-shaped hypersurfaces in $T^*B$.
\end{abstract}
\maketitle
%\tableofcontents

\section{Introduction}

Let $B$ be a closed manifold and $\Sigma\subset T^*B$ be a fiber-wise star-shaped hypersurface \wrt the standard Liouville vector field. $\Sigma$ is foliated by the Reeb flow associated to the Liouville 1-form $\lambda$. We denote by $L_x$ the leaf through $x\in\Sigma$. Let $\psi\in\Ham_c(T^*B)$ be in the space of Hamiltonian diffeomorphisms generated by compactly supported time dependent Hamiltonian functions. Then a leaf-wise intersection is a point $x\in\Sigma$ with the property $\psi(x)\in L_x$. The search for leaf-wise intersections was initiated by Moser in \cite{Moser_A_fixed_point_theorem_in_symplectic_geometry} and pursued further in 
\cite{Banyaga_On_fixed_points_of_symplectic_maps,Hofer_On_the_topological_properties_of_symplectic_maps,
Ekeland_Hofer_Two_symplectic_fixed_point_theorems_with_applications_to_Hamiltonian_dynamics,Ginzburg_Coisotropic_intersections,Dragnev_Symplectic_rigidity_symplectic_fixed_points_and_global_perturbations_of_Hamiltonian_systems,Albers_Frauenfelder_Leafwise_intersections_and_RFH,Ziltener_coisotropic,Gurel_leafwise_coisotropic_intersection,Kang_Existence_of_leafwise_intersection_points_in_the_unrestricted_case}. A brief history of the search for leaf-wise intersections is given below.

We call $\Sigma$ non-degenerate if Reeb orbits on $\Sigma$ form a  discrete set. A generic $\Sigma$ is non-degenerate, see \cite[Theorem B.1]{Cieliebak_Frauenfelder_Restrictions_to_displaceable_exact_contact_embeddings}. We denote by $\L_B$ the free loop space of $B$.

\begin{Theorem}\label{thm:generically_infinitely_many_LI}
Let $\dim\H_*(\L_B)=\infty$. If $\dim B\geq2$ and $\Sigma$ is non-degenerate then for a generic $\psi\in\Ham_c(T^*B)$ there exist infinitely many leaf-wise intersections.
\end{Theorem}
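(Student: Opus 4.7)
The plan is to realize leaf-wise intersections as critical points of a perturbed Rabinowitz action functional and combine this with the cotangent-bundle computation of Rabinowitz Floer homology. Choose a defining Hamiltonian $F$ for $\Sigma$ (so that $X_F|_\Sigma$ is the Reeb vector field) and $H\in C^\infty_c(\R/\Z\times T^*B)$ generating $\psi$, and consider the perturbed Rabinowitz action functional
\[
\A^H(u,\eta) \;=\; -\int_0^1 u^*\lambda \;-\; \eta\int_0^1 F(u(t))\,dt \;-\; \int_0^1 H(t,u(t))\,dt
\]
on $\L(T^*B)\times\R$. Its critical set splits naturally as $\Crit\A^H=\P\sqcup\Pe$, where $\P$ consists of the closed Reeb orbits of $\Sigma$ (independent of $\psi$) and $\Pe$ of the orbits encoding genuine leaf-wise intersections of $\psi$. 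Non-degeneracy of $\Sigma$ keeps $\P$ isolated, and a parametric Sard--Smale argument in $\psi$ shows that generically $\A^H$ is Morse, making $\Pe$ discrete as well.

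I would then construct the essential Rabinowitz Floer chain complex $\CFe$ generated by $\Pe$, with differential $\pe$ induced as a suitable subquotient of the full Rabinowitz Floer differential, and consider its homology $\HFe$. The core claim is that $\HFe$ is infinite-dimensional under the hypothesis $\dim\H_*(\L_B)=\infty$; granted this, finiteness of leaf-wise intersections would make $\Pe$ finite, hence $\CFe$ and $\HFe$ finite-dimensional, a contradiction. The infinite-dimensionality would be read off from the Cieliebak--Frauenfelder--Oancea (equivalently Abbondandolo--Schwarz) identification $\RFH(\Sigma,T^*B)\cong\H_*(\L_B)$ via a long exact sequence or action/degree bifiltration relating $\HFe$, $\RFH$, and the homology of the Reeb subcomplex.

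The main obstacle is precisely this last comparison: already in the unperturbed case $H=0$ the closed Reeb orbits alone compute $\RFH\cong\H_*(\L_B)$, so it is not at all automatic that ``essentializing'' by the Reeb part leaves an infinite-dimensional summand. Overcoming this would require continuation-coupling the unperturbed and perturbed Rabinowitz setups and using Hofer-norm-bounded spectral invariants on $\RFH$ to distinguish action values that are rigid under Hamiltonian perturbation (those of Reeb orbits in $\P$) from action values that move with $H$ (those in $\Pe$); one then argues that infinitely many homology classes can only be matched across the two pictures via pairings involving generators in $\Pe$, forcing $|\Pe|=\infty$. The hypothesis $\dim B\geq 2$ is expected to enter at the transversality step, ruling out low-dimensional exceptional behaviour.
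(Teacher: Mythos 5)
Your overall framework (perturbed Rabinowitz action functional, Morse genericity via Sard--Smale, the Cieliebak--Frauenfelder--Oancea / Abbondandolo--Schwarz computation $\RFH\cong\H_*(\L_B)$ transported by a continuation isomorphism) matches the paper, but the central step of your plan has a genuine gap that the paper never needs to confront. First, the claimed splitting $\Crit\A^H=\P\sqcup\Pe$ with $\P$ the closed Reeb orbits ``independent of $\psi$'' is not correct for the perturbed functional: a critical point solves $\partial_tv=\eta X_F(t,v)+X_H(t,v)$, so for $H\neq0$ the closed Reeb orbits are in general not critical points at all, and there is no natural Reeb subcomplex to split off. Second, and more seriously, you yourself name the obstacle --- in the unperturbed case the closed Reeb orbits alone already compute all of $\RFH\cong\H_*(\L_B)$, so nothing guarantees that an ``essential'' homology $\HFe$ generated only by the genuinely leaf-wise-intersection critical points is infinite-dimensional --- and your proposed remedy (spectral invariants separating ``rigid'' from ``moving'' action values) is neither carried out nor obviously workable. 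As written, the argument does not produce infinitely many leaf-wise intersections.

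The paper's route is simpler and avoids any essential complex. By Proposition \ref{prop:critical_points_give_LI} \emph{every} critical point of $\A^{\Mp}$ gives a leaf-wise intersection via $x=v(\tfrac12)$, and the map $\Crit\A^{\Mp}\to\{\text{leaf-wise intersections}\}$ is \emph{injective} unless some leaf-wise intersection is periodic, i.e.\ lies on a closed Reeb orbit. Since $\A^{\Mp}$ is Morse and $\dim\RFH_*(\Mp)=\infty$, it has infinitely many critical points, so the only thing left to do is to exclude periodic leaf-wise intersections for generic $\psi$. That is Theorem \ref{thm:evaluation}: for non-degenerate $\Sigma$ the Reeb orbits sweep out a one-dimensional set, the evaluation map $(v,\eta,H)\mapsto v(\tfrac12)$ on the universal moduli space is a submersion onto $\Sigma$, and since $\dim\Sigma=2\dim B-1\geq3$ (this is where $\dim B\geq2$ enters --- not at the Morse transversality step, as you guessed) a generic $H$ makes all critical points miss the closed Reeb orbits. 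Replacing your essential-complex comparison by this injectivity-plus-evaluation-transversality argument closes the proof.
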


\begin{Rmk}$ $
\begin{itemize}
\item To our knowledge all so far known existence results for leaf-wise intersections assert only finite lower bounds. Moreover, all known results make smallness assumptions on either the $C^1$ or Hofer norm of $\psi$.\\[-1.5ex]
\item The assumption $\dim B\geq2$ is necessary as the example $B=S^1$ shows.\\[-1.5ex]
\item If $\pi_1(B)$ is finite then $\dim\H_*(\L_B)=\infty$ by a theorem of Vigu\'e-Poirrier and Sullivan \cite{Sullivan_Vigue_the_homology_theory_of_the_closed_geodesic_problem}. If the number of conjugacy classes of $\pi_1(B)$ is infinite then $\dim\H_0(\L_B)=\infty$. Therefore, the only remaining case is if $\pi_1(B)$ is infinite but the number of conjugacy classes of $\pi_1(B)$ is finite.
\end{itemize}
\end{Rmk}

\subsection{History of the problem and related results}

The problem addressed above is a special case of the leaf-wise coisotropic intersection problem. For that let $N\subset(M,\om)$ be a coisotropic submanifold. Then $N$ is foliated by isotropic leafs, see \cite[Section 3.3]{McDuff_Salamon_introduction_symplectic_topology}. The problem asks for a leaf $L$ such that $\phi(L)\cap L\neq\emptyset$ for $\phi\in\Ham_c(M,\om)$. 

The first existence result was obtained by Moser in \cite{Moser_A_fixed_point_theorem_in_symplectic_geometry} for simply connected $M$ and $C^1$-small $\phi$. This was later generalized by Banyaga \cite{Banyaga_On_fixed_points_of_symplectic_maps} to non-simply connected $M$.

The $C^1$-smallness assumption was replaced by Hofer, Ekeland-Hofer in \cite{Hofer_On_the_topological_properties_of_symplectic_maps},\cite{Ekeland_Hofer_Two_symplectic_fixed_point_theorems_with_applications_to_Hamiltonian_dynamics} for hypersurfaces of restricted contact type in $\R^{2n}$ by a much weaker smallness assumption, namely that the Hofer norm of $\phi$ is smaller than a certain symplectic capacity. Only recently, the result by Ekeland-Hofer was generalized in two different directions. It was extended by Dragnev \cite{Dragnev_Symplectic_rigidity_symplectic_fixed_points_and_global_perturbations_of_Hamiltonian_systems} to so-called ``coisotropic submanifolds of contact type in $\R^{2n}$''. Ginzburg \cite{Ginzburg_Coisotropic_intersections} generalized from restricted contact type in $\R^{2n}$ to restricted contact type in subcritical Stein manifolds. Moreover, examples by Ginzburg \cite{Ginzburg_Coisotropic_intersections} show that the Ekeland-Hofer result is a symplectic rigidity result, namely it becomes wrong for arbitrary hypersurfaces. In \cite{Albers_Frauenfelder_Leafwise_intersections_and_RFH} the authors proved multiplicity results for restricted contact-type hypersurfaces. These were recently generalized by Kang in \cite{Kang_Existence_of_leafwise_intersection_points_in_the_unrestricted_case}. Ziltener \cite{Ziltener_coisotropic} established multiplicity results in the special case of fibrations. Finally, Gurel \cite{Gurel_leafwise_coisotropic_intersection} obtained existence results for leaf-wise intersections for coisotropic submanifolds of restricted contact type.

\subsubsection*{Acknowledgments}
The authors are partially supported by the German Research Foundation (DFG) through Priority Program 1154 "Global Differential Geometry", grant FR 2637/1-1, and NSF grant DMS-0903856. 

\section{Leaf-wise intersections  and Rabinowitz Floer homology}

Let $(M,\om)$ be a symplectic manifold and $f\in C^\infty(M)$ an autonomous Hamiltonian function. Since energy is preserved the hypersurface $\Sigma:=f^{-1}(0)$ is invariant under the Hamiltonian flow $\phi_f^t$ of $f$. The Hamiltonian flow $\phi_f^t$ is generated by the Hamiltonian vector filed $X_f$ which is uniquely defined by the equation $\om(X_f,\cdot)=df$. If $0$ is a regular value of $f$ the hypersurface is a coisotropic submanifold which is foliated by 1-dimensional isotropic leaves, see \cite[Section 3.3]{McDuff_Salamon_introduction_symplectic_topology}. If we denote by $L_x$ the leaf through $x\in\Sigma$ we have the equality
\beq
L_x=\bigcup_{t\in\R}\phi_f^t(x)\;.
\eeq
Given a time-dependent Hamiltonian function $H:[0,1]\times M\pf\R$ with Hamiltonian flow $\phi_H^t$ we are interested in points $x\in\Sigma$ with the property
\beq
\phi_H^1(x)\in L_x\;.
\eeq
This notion was introduced and studied by Moser in \cite{Moser_A_fixed_point_theorem_in_symplectic_geometry}. Such points are called leaf-wise intersections. For a physical interpretation of leaf-wise intersections it is useful to think of the Hamiltonian $H$ as a perturbation of the conservative Hamiltonian system $\phi_f^t$. More dramatically one can think of $H$ as an earthquake lasting from time $t=0$ to $t=1$. Without the earthquake the physical system propagates along a fixed leaf of $\Sigma$. Now we can ask whether the physical system survives the earthquake unharmed. This happens precisely if there exists a leaf-wise intersection. We refer to the article \cite{Moser_A_fixed_point_theorem_in_symplectic_geometry} by Moser for further physical applications and examples.

\begin{Def}\label{def:periodic_LI}
A leaf-wise intersection $x\in\Sigma$ is called periodic if the leaf $L_x$ is a closed orbit of the flow $\phi_f^t$.
\end{Def}

\begin{Def}\label{def:Moser_pair}
A pair $\Mp=(F,H)$ of Hamiltonian functions $F,H:S^1\times M\pf R$ is called a Moser pair if it satisfies
\beq
F(t,\cdot)=0\quad\forall t\in[\tfrac12,1]\qquad\text{and}\qquad H(t,\cdot)=0\quad\forall t\in[0,\tfrac12]\;,
\eeq
and $F$ is of the form $F(t,x)=\rho(t)f(x)$ for some smooth map $\rho:S^1\to S^1$ with $\int_0^1\rho(t) dt=1$ and $f:M\pf\R$. 
\end{Def}

\begin{Def}\label{def:set_of_half_constant_Hamiltonians}
We set
\beq
\mathcal{H}:=\{H\in C^\infty(S^1\times M)\mid H\text{ has compact support and } H(t,\cdot)=0\quad\forall t\in[0,\tfrac12]\}
\eeq
\end{Def}

\begin{Rmk}
It's easy to see that the $\Ham(M,\om)\equiv\{\phi_H^1\mid H\in\mathcal{H}\}$, e.g.\cite{Albers_Frauenfelder_Leafwise_intersections_and_RFH}.
\end{Rmk}

Let $(M,\om=-d\lambda)$ be an exact symplectic manifold. Then for a Moser pair $\Mp=(F,H)$ the perturbed Rabinowitz action functional is defined by
\bea
\A^\Mp:\L_M\times\R&\pf\R\\
(v,\eta)&\mapsto\int_{S^1}v^*\lambda-\int_0^1H(t,v)dt-\eta\int_0^1F(t,v)dt
\eea
where $\L_M:=C^\infty(S^1,M)$. We recall that $\om(X_F,\cdot)=dF(\cdot)$. Then a critical point $(v,\eta)$ of $\A^\Mp$ is a solution of 
\beq\label{eqn:critical_points_eqn}
\left. 
\begin{aligned}
\partial_tv=\eta X_F(t,v)+X_H(t,v)\\
\int_0^1F(t,v)dt=0
\end{aligned}\right\}
\eeq
We observed in \cite{Albers_Frauenfelder_Leafwise_intersections_and_RFH} that critical points of $\A^\Mp$ give rise to leaf-wise intersections. 

\begin{Prop}[\cite{Albers_Frauenfelder_Leafwise_intersections_and_RFH}]\label{prop:critical_points_give_LI}
Let $(v,\eta)$ be a critical point of $\A^\Mp$ then $x:=v(\tfrac12)\in f^{-1}(0)$ and
\beq
\phi_H^1(x)\in L_x
\eeq
thus, $x$ is a leaf-wise intersection.\\[1ex]
Moreover, the map $\Crit\A^\Mp\to\{\text{leaf-wise intersections}\}$ is injective unless there exists a periodic leaf-wise intersection (see Definition \ref{def:periodic_LI}).
\end{Prop}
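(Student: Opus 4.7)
The plan is to exploit the fact that in a Moser pair $\Mp=(F,H)$ the functions $F$ and $H$ have temporally disjoint supports in $S^1$, so the critical point equation \eqref{eqn:critical_points_eqn} decouples on the two half-intervals $[0,\tfrac12]$ and $[\tfrac12,1]$. On $[\tfrac12,1]$ the equation reduces to $\p_t v = X_H(t,v)$; since $H$ vanishes on $[0,\tfrac12]$, we have $X_H(t,\cdot)\equiv 0$ there, so $\phi_H^t=\id$ for $t\leq\tfrac12$, and by uniqueness of ODEs $v(t)=\phi_H^t(x)$ on $[\tfrac12,1]$ where $x:=v(\tfrac12)$. In particular $v(1)=\phi_H^1(x)$. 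On $[0,\tfrac12]$ the equation becomes $\p_t v=\eta\rho(t)X_f(v)$. The Moser-pair conditions force $\rho\equiv 0$ on $[\tfrac12,1]$, so setting $\tau(t):=\int_0^t\rho(s)\,ds$ one has $\tau(\tfrac12)=\int_0^1\rho\,dt=1$. A time reparametrization then gives $v(t)=\phi_f^{\eta\tau(t)}(v(0))$, whence $x=\phi_f^\eta(v(0))$, i.e.\ $v(0)=\phi_f^{-\eta}(x)$. The loop closure $v(0)=v(1)$ produces
\beq
\phi_H^1(x)=\phi_f^{-\eta}(x)\in L_x,
\eeq
which is the leaf-wise intersection property. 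To verify $x\in f^{-1}(0)$, I would use that $f$ is invariant under its own Hamiltonian flow, so $f\circ v$ is constant on $[0,\tfrac12]$; the constraint equation then reads $0=\int_0^1 F(t,v)\,dt = f(x)\int_0^{1/2}\rho(t)\,dt = f(x)$.

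For the injectivity statement, suppose two critical points $(v_1,\eta_1),(v_2,\eta_2)$ satisfy $v_1(\tfrac12)=v_2(\tfrac12)=x$. By the analysis above, each $v_i$ is determined on $[\tfrac12,1]$ by $x$ alone and on $[0,\tfrac12]$ by $x$ together with $\eta_i$, and the loop condition forces $\phi_f^{-\eta_i}(x)=\phi_H^1(x)$ for both $i$. Hence $\phi_f^{\eta_2-\eta_1}(x)=x$. If $\eta_1\neq\eta_2$, this exhibits $x$ as a periodic point of $\phi_f^t$, so $L_x$ is a closed orbit and $x$ is a periodic leaf-wise intersection in the sense of Definition \ref{def:periodic_LI}. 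In the absence of periodic leaf-wise intersections we must have $\eta_1=\eta_2$, and then $v_1\equiv v_2$, giving injectivity.

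The argument is essentially a direct unravelling of the Hamiltonian flow equation and presents no real obstacle; the one quantitative ingredient requiring care is the identity $\int_0^{1/2}\rho\,dt=1$, which follows from combining the support condition on $F$ with the normalization in Definition \ref{def:Moser_pair} and is precisely what makes the endpoint formula $x=\phi_f^\eta(v(0))$ clean, with no spurious constant in the exponent.
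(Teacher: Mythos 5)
Your argument is correct and is essentially the proof given in the cited reference \cite{Albers_Frauenfelder_Leafwise_intersections_and_RFH} (the paper itself only quotes the statement): one decouples the critical point equation on $[0,\tfrac12]$ and $[\tfrac12,1]$ using the disjoint temporal supports of $F$ and $H$, obtains $v(\tfrac12)=\phi_f^{\eta}(v(0))$ and $v(1)=\phi_H^1(v(\tfrac12))$, and reads off both $f(x)=0$ from the constraint and the leaf-wise intersection from the loop closure. The injectivity discussion via $\phi_f^{\eta_2-\eta_1}(x)=x$ is likewise the standard dichotomy, so there is nothing to add.
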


\begin{Def}
A Moser pair $\Mp=(F,H)$ is of contact-type if the following four conditions hold.
\begin{enumerate}
\item $0$ is a regular value of $f$.
\item $df$ has compact support.
\item The hypersurface $f^{-1}(0)$ is a closed restricted contact type hypersurface of $(M,\lambda)$.
\item The Hamiltonian vector field $X_f$ restricts to the Reeb vector field on $f^{-1}(0)$.
\end{enumerate}
\end{Def}

\begin{Rmk}
If $\Sigma\subset T^*B$ is a fiber-wise star-shaped hypersurface there exists a contact-type Moser pair $\Mp$ with $\Sigma=f^{-1}(0)$.
\end{Rmk}

\begin{Def}
A Moser pair $\Mp$ is called regular if $\A^\Mp$ is Morse.
\end{Def}

We recall the following 
\begin{Prop}[\cite{Albers_Frauenfelder_Leafwise_intersections_and_RFH}]\label{prop:generic_is_regular} 
A generic contact-type Moser pair is regular. 
\end{Prop}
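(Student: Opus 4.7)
The argument is a standard Sard--Smale transversality argument applied to the universal moduli space of critical points parametrized by the perturbation Hamiltonian. Fix $F(t,x) = \rho(t) f(x)$ with the contact-type properties, and introduce a separable Banach completion $\mathcal{H}^\epsilon \subset \mathcal{H}$ in Floer's $C^\epsilon$-topology whose elements still vanish on $[0, \tfrac12] \times M$ and have compact spatial support. The universal zero set of interest is
$$\widetilde{\Crit} := \left\{(v, \eta, H) \in W^{1,p}(S^1, M) \times \R \times \mathcal{H}^\epsilon : (v, \eta) \text{ solves } \eqref{eqn:critical_points_eqn}\right\}.$$

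The central step is to verify that the linearization of the defining operator in the full variable $(v, \eta, H)$ is surjective at every point of $\widetilde{\Crit}$, which endows $\widetilde{\Crit}$ with the structure of a smooth Banach submanifold. An $H$-variation $\widehat{H}$ contributes $-X_{\widehat{H}}(t, v)$ to the linearized flow equation and leaves the mean-value constraint untouched. By the standard $L^2$-duality argument, any cokernel element $(\xi, b)$ must satisfy $\int_0^1 \om(X_{\widehat{H}}(t, v), \xi) \, dt = 0$ for every admissible $\widehat{H}$. Choosing $\widehat{H}$ supported in arbitrarily small open sets of $(\tfrac12, 1) \times M$ forces $\xi \equiv 0$ on $[\tfrac12, 1]$; the linearized equation on $[0, \tfrac12]$, which is an ODE independent of $H$, then propagates this vanishing to all of $S^1$ by uniqueness of initial value problems, and the remaining equation pairs $b$ with $X_F$ to conclude $b = 0$ as well.

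Once $\widetilde{\Crit}$ is a Banach manifold, the projection $\pi: \widetilde{\Crit} \to \mathcal{H}^\epsilon$ is Fredholm of index zero because its fibre-wise linearization is the Hessian of $\A^\Mp$, a self-adjoint Fredholm operator on $L^2(S^1, v^*TM) \oplus \R$. By Sard--Smale the set $\mathcal{H}^\epsilon_{\mathrm{reg}}$ of regular values of $\pi$ is comeagre; for $H \in \mathcal{H}^\epsilon_{\mathrm{reg}}$ the Hessian is surjective of index zero, hence an isomorphism, so $\A^{(F, H)}$ is Morse. A standard Taubes-type argument transfers the conclusion from the Banach space $\mathcal{H}^\epsilon$ back to the Fr\'echet space $\mathcal{H}$ by writing the smooth regular set as a countable intersection of open-dense subsets indexed by action bounds.

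The principal obstacle is the surjectivity step under the constraint that $\widehat{H}$ may only be perturbed on $[\tfrac12, 1] \times M$: the flow on the complementary half-interval is driven purely by the fixed $F$ and cannot be tuned directly. The argument survives because the linearized equation on $[0, \tfrac12]$ is an honest ODE, so vanishing of $\xi$ at the single endpoint $t = \tfrac12$ that is determined by the perturbable half suffices to force vanishing throughout.
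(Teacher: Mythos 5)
Your proposal is correct and follows essentially the same route as the paper and its reference \cite{Albers_Frauenfelder_Leafwise_intersections_and_RFH}: the universal moduli space $S^{-1}(0)$ cut out by $d\A^{(F,H)}$ over a Banach completion of $\mathcal{H}$, surjectivity of the linearization via time-dependent perturbations $\hat H$ supported in $(\tfrac12,1)\times M$ plus unique continuation for the ODE on the unperturbed half-interval (this is exactly the content of Proposition \ref{prop:linearized_operator_surjective}), followed by Sard--Smale applied to the index-zero Fredholm projection onto the space of Hamiltonians and a Taubes-type passage back to $C^\infty$. No substantive differences to report.
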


For a regular contact-type Moser pair $\Mp$ on an exact symplectic manifold which is convex at infinity Rabinowitz Floer homology $\RFH_*(\Mp)$ is defined from the chain complex
\beq
\RFC_k(\Mp):=\Big\{\xi=\!\!\!\!\!\!\!\sum_{\CZ(c)=k}\!\!\!\!\xi_c\,c\mid\#\{c\in\Crit{\A^\Mp}\mid\xi_c\neq0\in\Z/2,\A^\Mp(c)\geq\kappa\}<\infty\;\forall\kappa\in\R\Big\}
\eeq
where the boundary operator is defined by counting gradient flow lines of $\A^\Mp$ in the sense of Floer homology, see \cite{Cieliebak_Frauenfelder_Restrictions_to_displaceable_exact_contact_embeddings,Albers_Frauenfelder_Leafwise_intersections_and_RFH} for details. In particular, on cotangent bundles $T^*B$ $\RFH_*(\Mp)$ is well-defined.

If the Moser pair is of the form $\Mp=(F,0)$ then $\A^\Mp$ is never Morse. But for a generic $F$ the action functional  $\A^\Mp$ is Morse-Bott with critical manifold being the disjoint union of constant solutions of the form $(p,0)$, $p\in f^{-1}(0)$, and a family of circles corresponding to closed characteristics of $\om$ on $f^{-1}(0)$.

\begin{Def}
A Moser pair is called weakly regular if it is of the form just described or if it is regular. 
\end{Def}

\begin{Rmk}\label{rmk:perturb_by_Morse_fctn}
For weakly regular Moser pairs $\Mp$ Rabinowitz Floer homology $\RFH_*(\Mp)$ can still be defined by taking the critical points of a Morse function on the critical manifolds as generators, see \cite{Cieliebak_Frauenfelder_Restrictions_to_displaceable_exact_contact_embeddings} for details. 
\end{Rmk}

\begin{Rmk}\label{rmk:fiberwise_starshaped_are_all_isotopic}
We note that if we have two Moser pairs $\Mp_0=(F_0,H_0)$ and $\Mp_1=(F_1,H_1)$ associated to two fiber-wise star-shaped hypersurfaces $\Sigma_0$ and $\Sigma_1$ then they can be joint through a smooth family of Moser pairs $\Mp^r=(F^r,H^r)$ such that the corresponding hypersurfaces $\Sigma_r$ remain fiber-wise star-shaped. In particular, each $\Mp^r$ is a contact-type Moser pair.
\end{Rmk}

Let $\Mp^r=(F^r,H^r)$, $r\in[0,1]$ be a smooth family of contact-type Moser pairs. We fix once for all a smooth function $\beta\in C^\infty(\R,[0,1])$ satisfying $\beta(s)=0$ for $s\leq0$, $\beta(s)=1$ for $s\geq1$, and $0\leq\beta'\leq2$. Then we set
\beq
F_s:=F^{\beta(s)},\;H_s:=H^{\beta(s)},\;\text{and}\; \Mp_s:=(F_s,H_s)
\eeq
for $s\in\R$. The corresponding $s$-dependent Rabinowitz action functional is
\beq\label{eqn:Rabinowitz action functional}
\A_s(v,\eta):=\int_{S^1}v^*\lambda-\int_0^1H_s(t,v(t))dt-\eta\int_0^1F_s(t,v(t))dt
\eeq
It is used to define the standard continuation homomorphisms in Rabinowitz Floer homology, that is, given two weakly regular Moser pairs $\Mp^0$ and $\Mp^1$ there exist natural isomorphisms
\beq
m^{\Mp^0}_{\Mp^1}:\RFH_*(\Mp^0)\pf\RFH_*(\Mp^1)\,,
\eeq
see \cite{Albers_Frauenfelder_Leafwise_intersections_and_RFH} for details.

\section{Proof of Theorem \ref{thm:generically_infinitely_many_LI}}

Let $(B,g)$ be a closed Riemannian manifold and $S^*_gB$ the unit cotangent bundle with respect to $g$. Cutting off the function $\frac12(||p||_g^2-1)$ outside a large compact subset of $T^*B$ gives rise to a contact-type Moser pair $\Mp_0=(F_0,0)$ for $S^*_gB$.

\begin{Rmk}\label{rmk:Abraham}
According to a Theorem by Abraham \cite{Abraham_Bumpy_metrics} for a generic metric $g$ the Moser pair $\Mp_0=(F_0,0)$ is weakly regular. More precisely, every bumpy metric satisfies this condition.
\end{Rmk}

We recall 
\begin{Thm}\cite{Cieliebak_Frauenfelder_Oancea_Rabinowitz_Floer_homology_and_symplectic_homology,Abbondandolo_Schwarz_Estimates_and_computations_in_Rabinowitz_Floer_homology}
For degrees $*\neq0,1$
\beq
\RFH_*(\Mp_0)\cong\begin{cases}
H_*(\L_B)\\[.5ex]
H^{-*+1}(\L_B)
\end{cases}
\eeq
\end{Thm}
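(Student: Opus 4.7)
The plan is to combine two well-established ingredients. The first is the Viterbo-type theorem, worked out in detail by Abbondandolo--Schwarz and Salamon--Weber, which identifies the symplectic (co)homology of a cotangent bundle with the (co)homology of its free loop space:
\[
SH_*(T^*B)\cong\H_*(\L_B)\qquad\text{and}\qquad SH^*(T^*B)\cong\H^*(\L_B).
\]
Both isomorphisms are established by comparing the Floer complex of a fiber-wise quadratic Hamiltonian on $T^*B$ with the Morse complex of the energy functional on $\L_B$. For a generic (bumpy) metric $g$ the energy functional is Morse--Bott and can be perturbed, which dovetails with Remark \ref{rmk:Abraham}.

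The second ingredient is the Cieliebak--Frauenfelder--Oancea long exact sequence, valid for any exact symplectic manifold convex at infinity equipped with a contact-type Moser pair $\Mp$:
\[
\ldots\to SH^{-*+1}(W)\to\RFH_*(\Mp)\to SH_*(W)\to SH^{-*}(W)\to\ldots
\]
Conceptually, this sequence expresses the fact that Rabinowitz Floer homology glues the positive-action part of symplectic homology (Reeb orbits) to the negative-action part of symplectic cohomology. Substituting the Viterbo isomorphism in the case $W=T^*B$ yields
\[
\ldots\to\H^{-*+1}(\L_B)\to\RFH_*(\Mp_0)\to\H_*(\L_B)\to\H^{-*}(\L_B)\to\ldots
\]

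For $*\geq 2$ both $\H^{-*+1}(\L_B)$ and $\H^{-*}(\L_B)$ vanish, since loop-space cohomology is concentrated in non-negative degrees; hence the middle arrow is an isomorphism and $\RFH_*(\Mp_0)\cong\H_*(\L_B)$. For $*\leq -1$ the corresponding homology terms $\H_*(\L_B)$ and $\H_{*-1}(\L_B)$ vanish for the analogous reason, so the preceding map $\H^{-*+1}(\L_B)\to\RFH_*(\Mp_0)$ becomes an isomorphism. Only in degrees $*=0,1$ do all four surrounding terms remain potentially non-trivial, which reflects the genuine interplay of constant loops with the low-dimensional loop-space (co)homology and accounts for the exclusion in the statement.

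The main obstacle is to pin down the grading conventions consistently across the three theories: Rabinowitz Floer homology (graded by a Maslov/Conley--Zehnder-type index), symplectic (co)homology of $T^*B$ (typically graded with a shift by $\dim B$), and loop-space (co)homology (graded topologically). Once these shifts are aligned as in the cited references, the argument above produces the claimed isomorphism verbatim. A secondary point is that $\Mp_0=(F_0,0)$ is only weakly regular, so one must invoke Remark \ref{rmk:perturb_by_Morse_fctn} to define $\RFH_*(\Mp_0)$ via a Morse function on the Morse--Bott critical manifolds before carrying out the comparison.
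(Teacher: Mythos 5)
The paper offers no proof of this statement---it is quoted verbatim from the cited references---and your outline (the Viterbo-type isomorphism $SH_*(T^*B)\cong \H_*(\L_B)$ fed into the Cieliebak--Frauenfelder--Oancea long exact sequence relating $\RFH_*$ to symplectic homology and cohomology, with the (co)homological terms vanishing outside degrees $0,1$ because loop-space (co)homology is concentrated in non-negative degrees) is precisely the argument carried out in those references. Your two caveats---aligning the grading conventions of the three theories, and defining $\RFH_*(\Mp_0)$ for the merely weakly regular pair via a Morse--Bott perturbation as in Remark \ref{rmk:perturb_by_Morse_fctn}---are exactly the points that need care, so nothing essential is missing.
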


\begin{proof}[Proof of Theorem \ref{thm:generically_infinitely_many_LI}]
We fix a fiber-wise star-shaped hypersurface $\Sigma$ and $\psi\in\Ham_c(T^*B)$. This gives rise to a Moser pair $\Mp=(F,H)$. If $\Sigma$ is non-degenerate and $\psi$ is generic the perturbed Rabinowitz action functional $\A^\Mp$ is Morse, see Proposition \ref{prop:generic_is_regular}. Since $\Sigma$ is fiber-wise star-shaped the Moser pair $\Mp$ can be joined to $\Mp_0$ through contact-type Moser pairs, see Remark \ref{rmk:fiberwise_starshaped_are_all_isotopic}. Thus, using the continuation isomorphism
\beq
m^{\Mp_0}_{\Mp}:\RFH_*(\Mp_0)\pf\RFH_*(\Mp)
\eeq
we conclude that 
\beq
\RFH_*(\Mp)\cong\begin{cases}
H_*(\L_B)\\[.5ex]
H^{-*+1}(\L_B)
\end{cases}
\eeq
Since we assume that $\dim\H_*(\L_B)=\infty$ we have $\dim\RFH_*(\Mp)=\infty$ and therefore, the Morse function $\A^\Mp$ has infinitely many critical points. Now, Proposition \ref{prop:critical_points_give_LI} implies that there exist infinitely many leaf-wise intersections or a period leaf-wise intersection. Thus, to prove Theorem \ref{thm:generically_infinitely_many_LI} we need to exclude the latter for a generic $\psi\in\Ham_c(T^*B)$. That is, we need to make sure that for generic $\psi$ the critical points of $\A^\Mp$ do not intersect closed Reeb orbits. This is exactly the content of Theorem \ref{thm:evaluation}.
\end{proof}

We recall that a fiber-wise star-shaped hypersurface $\Sigma$ is called non-degenerate if the set $\mathcal{R}$ of Reeb orbits on $\Sigma$ form a discrete set. A generic $\Sigma$ is non-degenerate, see \cite[Theorem B.1]{Cieliebak_Frauenfelder_Restrictions_to_displaceable_exact_contact_embeddings}. 

\begin{Thm}\label{thm:evaluation}
Let $\Sigma=f^{-1}(0)\subset T^*B$ be a non-degenerate star-shaped hypersurface and $\Mp_0=(F_0,0)$ be the corresponding weakly regular Moser pair. If $\dim B\geq2$ then the set
\beq
\mathcal{H}_\Sigma:=\{H\in\mathcal{H}\mid \A^{(F_0,H)}\text{ is Morse and }\im(x)\cap\im(y)=\emptyset\quad\forall x\in\Crit\A^{(F_0,H)},y\in\mathcal{R}\}
\eeq
is generic in $\mathcal{H}$ (see Definition \ref{def:set_of_half_constant_Hamiltonians}).
\end{Thm}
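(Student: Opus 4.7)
The plan is a parametric transversality (Sard--Smale) argument combined with Baire category. Since $\Sigma$ is non-degenerate $\mathcal{R}$ is countable, and by Proposition~\ref{prop:generic_is_regular} the subset $\mathcal{H}_\mathrm{reg} \subset \mathcal{H}$ on which $\A^{(F_0, H)}$ is Morse is generic. For each $y \in \mathcal{R}$ and each action bound $C \in \N$ I set
$$\mathcal{H}_{y, C} := \{H \in \mathcal{H}_\mathrm{reg} : \im(v) \cap \im(y) = \emptyset \text{ for all } (v, \eta) \in \Crit \A^{(F_0, H)} \text{ with } |\A^{(F_0, H)}(v, \eta)| \leq C\}.$$
Then $\mathcal{H}_\Sigma = \bigcap_{y, C} \mathcal{H}_{y, C}$, so by Baire it suffices to prove each $\mathcal{H}_{y, C}$ is open and dense. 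Openness is standard, using Rabinowitz--Floer compactness of the bounded-action critical set under $C^\infty$-small perturbation of $H$.

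For density I would form the universal moduli space
$$\widetilde{\M}_C := \{(H, v, \eta) : H \in \mathcal{H},\; (v, \eta) \in \Crit \A^{(F_0, H)},\; |\A^{(F_0, H)}(v, \eta)| \leq C\}$$
with a suitable Sobolev completion, which by standard arguments as in \cite{Albers_Frauenfelder_Leafwise_intersections_and_RFH} is a separable Banach manifold whose projection $\pi : \widetilde{\M}_C \to \mathcal{H}$ is a proper Fredholm map of index zero. Since every critical loop $v$ decomposes as a Reeb segment $v|_{[0, 1/2]} \subset \Sigma$ followed by a Hamiltonian arc $v|_{[1/2, 1]} \subset T^*B$, I split the test into two evaluation maps
$$\mathrm{ev}^- : \widetilde{\M}_C \times [0, \tfrac12] \to \Sigma, \qquad \mathrm{ev}^+ : \widetilde{\M}_C \times (\tfrac12, 1) \to T^*B, \qquad (H, v, \eta, s) \mapsto v(s).$$
The key claim is that both $\mathrm{ev}^\pm$ are submersions at points mapping into $\im(y)$. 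Granting this, $(\mathrm{ev}^\pm)^{-1}(\im(y))$ are Banach submanifolds whose projections to $\mathcal{H}$ are Fredholm of indices $1 - (2n - 2) = 3 - 2n$ and $1 - (2n - 1) = 2 - 2n$ respectively, where $n = \dim B$; both are strictly negative when $n \geq 2$. Sard--Smale then forces their images in $\mathcal{H}$ to be meager, yielding density of $\mathcal{H}_{y, C}$.

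The main obstacle is the verification of the two submersion claims. For $\mathrm{ev}^+$ at $s_0 \in (\tfrac12, 1)$: given a prescribed $w \in T_{v(s_0)} T^*B$, I would choose $\hat H \in T_H \mathcal{H}$ compactly supported immediately before $s_0$ along $v$ so that Duhamel's formula for the linearized Hamiltonian flow feeds $w$ into $\hat v(s_0)$; the Morse invertibility of the linearized critical-point operator then lets me adjust $(\hat v(0), \hat \eta)$ to close the linearized loop without disturbing the leading contribution, producing a tangent vector to $\widetilde{\M}_C$ whose image under $d\mathrm{ev}^+$ is $w$. For $\mathrm{ev}^-$ at $s_0 \in (0, \tfrac12)$: here $v(s) \in \Sigma$ forces the target to be only $T_{v(s_0)} \Sigma$, and I would parameterize tangent vectors by $(\hat v(0), \hat \eta) \in T_{v(0)} \Sigma \times \R$, use free variations of $\hat H$ on $[\tfrac12, 1]$ together with Morse invertibility to realize any such pair via the linearized loop condition, and observe that the linearized Reeb flow from $0$ to $s_0$ then fills $T_{v(s_0)} \Sigma$. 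The essential ingredient in both cases is the Morse hypothesis, which decouples the prescribed target inhomogeneity from the loop-closing constraint.
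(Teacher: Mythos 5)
Your overall architecture coincides with the paper's: both proofs run a parametric transversality argument on a universal moduli space of critical points with the Hamiltonian as parameter, establish that an evaluation map restricted to that moduli space is a submersion, conclude via Sard--Smale that the projection of the preimage of the (one-dimensional) Reeb locus to $\mathcal{H}$ has negative Fredholm index and hence meager image, and finish with a countable intersection (you exhaust by action bound $C$, the paper by period bound $n$ on the Reeb orbits; both work). The two real differences are these. First, you evaluate at every time $s$ via the pair $\ev^{\pm}$, which is more faithful to the literal statement $\im(x)\cap\im(y)=\emptyset$: the paper evaluates only at $t=\tfrac12$, which controls $v|_{[0,1/2]}$ for free (that arc is a Reeb trajectory through $v(\tfrac12)$ and $\im(y)$ is Reeb-invariant, so your $\ev^-$ is in fact redundant) but says nothing about the Hamiltonian arc $v|_{[1/2,1]}$; your $\ev^+$ covers that case, at the harmless cost of one extra index ($2-2n$ versus the paper's $2-2n$ at $t=\tfrac12$, and $3-2n$ for your $s$-parametrized version, all negative for $n\ge 2$). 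Only the leaf through $v(\tfrac12)$ matters for excluding periodic leaf-wise intersections, so both versions suffice for Theorem \ref{thm:generically_infinitely_many_LI}. Second, and more substantively, your mechanism for the submersion claim differs: you propose a Duhamel construction in which a bump in $\hat H$ injects the target vector $w$ and ``Morse invertibility lets me close the linearized loop without disturbing the leading contribution.'' That last clause is the soft spot --- the loop-closing correction is obtained by inverting the Hessian applied to a term that depends on the same $\hat H$, and nothing in your sketch rules out that it cancels the injected $w$. The paper sidesteps this entirely: Proposition \ref{prop:linearized_operator_surjective} already gives surjectivity of the vertical differential restricted to variations with $\hat v(\tfrac12)=0$, i.e.\ to the kernel of $d\,\ev$, and the elementary Lemma \ref{lemma:Dietmar} then formally decouples the loop-closing constraint from the evaluation constraint, yielding the submersion with no estimate at all. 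If you want to keep your argument self-contained, you should either prove the analogue of that restricted surjectivity (supportedness of $\hat H$ away from $t=s_0$ plus unique continuation is the standard route) or replace the Duhamel step by the annihilator argument; as written, the density step is a plausible plan rather than a proof.
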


\begin{proof}
We set $M:=T^*B$, $\L=W^{1,2}(S^1,M)$, and $\mathcal{H}^k:=\{H\in C^k(S^1\times M)\mid H(t,\cdot)=0\quad\forall t\in[0,\tfrac12]\}$. Furthermore, we define the Banach space bundle $\E\pf\L$ by $\E_v=L^2(S^1,v^*TM)$. We consider the section $S:\L\times\R\times\mathcal{H}^k\pf\E^\vee\times\R$ defined by
\beq
S(v,\eta,H):=d\A^{(F_0,H)}(v,\eta)\;.
\eeq 
Its vertical differential $DS:T_{(v_0,\eta_0,H)}\L\times\R\times\mathcal{H}^k\pf\E_{(v_0,\eta_0,H)}^\vee$ at $(v_0,\eta_0,H)\in S^{-1}(0)$ is
\beq
DS_{(v_0,\eta_0,H)}[(\hat{v},\hat{\eta},\hat{H})]=\He_{\A^{(F_0,H)}}(v_0,\eta_0)\big[(\hat{v},\hat{\eta},\hat{H})\,;\:\bullet\:\big]+\int_0^1\hat{H}(t,v_0)dt
\eeq
where $\He_{\A^{(F_0,H)}}$ is the Hessian of $\A^{(F_0,H)}$. In \cite{Albers_Frauenfelder_Leafwise_intersections_and_RFH} we proved the following.
\begin{Prop}\label{prop:linearized_operator_surjective}
The operator $DS_{(v_0,\eta_0,H)}$ is surjective for $(v_0,\eta_0,H)\in S^{-1}(0)$. In fact, $DS_{(v_0,\eta_0,H)}$ is surjective when restricted to the space
\beq
\mathcal{V}:=\{(\hat{v},\hat{\eta},\hat{H})\in T_{(v_0,\eta_0,H)}\L\times\R\times\mathcal{H}^k\mid \hat{v}(\tfrac12)=0\}\;.
\eeq
\end{Prop}

Thus, by the implicit function theorem the universal moduli space 
\beq
\M:=S^{-1}(0)
\eeq
is a smooth Banach manifold. We consider the projection $\Pi:\M\pf\mathcal{H}^k$. Then $\A^{(F_0,H)}$ is Morse if and only if $H$ is a regular value of $\Pi$, which by the theorem of Sard-Smale form a generic set (for $k$ large enough). Moreover, the Morse condition is $C^k$-open. Thus, for functions in an open and dense subset of $\mathcal{H}^k$ the functional $\A^{(F_0,H)}$ is Morse. 

Next we define the evaluation map 
\bea
\ev:\M&\pf\Sigma\\
(v_0,\eta_0,H)&\mapsto v_0(\tfrac12)
\eea
From Proposition \ref{prop:linearized_operator_surjective} together with Lemma \ref{lemma:Dietmar} below it follows that the evaluation map $\ev_H:=\ev(\cdot,\cdot,H):\Crit\A^{(F_0,H)}\pf\Sigma$ is a submersion for a generic choice of $H$.  Thus, the preimage of the one dimensional set $\mathcal{R}^\tau:=\{\text{Reeb orbits with period }\leq\tau\}$ under $\ev_H$ doesn't intersect $\Crit\A^{(F_0,H)}$ using that $\dim T^*B\geq4$. Therefore, the set
\beq
\mathcal{H}_\Sigma^n:=\{H\in\mathcal{H}^n\mid \A^{(F_0,H)}\text{ is Morse and }\im(x)\cap\im(y)=\emptyset\quad\forall x\in\Crit\A^{(F_0,H)},y\in\mathcal{R}^n\}
\eeq
is generic in $\mathcal{H}$ for all $n\in\N$. Now, the set $\mathcal{H}_\Sigma$ is a countable intersection of the sets $\mathcal{H}_\Sigma^n$, $n\in\N$. This proves the assertion of Theorem \ref{thm:evaluation}.
\end{proof}

We learned the following Lemma from Dietmar Salamon. 
\begin{Lemma}\label{lemma:Dietmar}
Let $\E\pf\B$ be a Banach bundle and $s:\B\pf\E$ a smooth section. Moreover, let $\phi:\B\pf N$ be a smooth map into the Banach manifold $N$.
We fix a point $x\in s^{-1}(0)\subset\B$ and set $K:=\ker d\phi(x)\subset T_x\B$ and assume the following two conditions.
\begin{enumerate}
\item The vertical differential $Ds|_K:K\pf\E_x$ is surjective.
\item $d\phi(x):T_x\B\pf T_{\phi(x)}N$ is surjective.
\end{enumerate}
Then  $d\phi(x)|_{\ker Ds(x)}:\ker Ds(x)\pf T_{\phi(x)}N$ is surjective.
\end{Lemma}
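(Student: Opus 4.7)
The plan is a short diagram chase that uses nothing beyond the two surjectivity hypotheses. Given a target vector $v \in T_{\phi(x)}N$, I would first use hypothesis (2) to choose some preimage $\xi_0 \in T_x\B$ with $d\phi(x)\xi_0 = v$. This $\xi_0$ will in general not lie in $\ker Ds(x)$, but any modification $\xi_0 + k$ with $k \in K = \ker d\phi(x)$ still maps to $v$ under $d\phi(x)$, so the task reduces to arranging $Ds(x)(\xi_0 + k) = 0$.

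Concretely, I want $k \in K$ with $Ds(x) k = -Ds(x)\xi_0$. The right-hand side is an element of $\E_x$, and hypothesis (1), namely that $Ds|_K : K \to \E_x$ is surjective, provides exactly such a $k \in K$. Setting $\xi := \xi_0 + k$ then yields $Ds(x)\xi = 0$ by construction and $d\phi(x)\xi = d\phi(x)\xi_0 + 0 = v$, so $\xi$ is the desired preimage in $\ker Ds(x)$.

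I do not expect any real obstacle here: the argument is purely linear algebra in the Banach-space tangent fibre $T_x\B$, and the only implicit point worth flagging is that at a zero $x \in s^{-1}(0)$ the vertical differential $Ds(x) : T_x\B \to \E_x$ is canonically defined independently of any choice of connection on $\E \to \B$, which is why the statement makes sense as formulated. No functional-analytic input such as closed range, Fredholmness, or a splitting of $K$ in $T_x\B$ is needed, since surjectivity of both $Ds|_K$ and $d\phi(x)$ is assumed outright; these hypotheses package all the analytic content and reduce the conclusion to the two-line chase above.
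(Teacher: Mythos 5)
Your argument is correct and is essentially identical to the paper's proof: the paper picks $\eta$ with $d\phi(x)\eta=\xi$, then $\zeta\in K$ with $Ds(x)\zeta=Ds(x)\eta$, and takes $\tau=\eta-\zeta$, which is your $\xi_0+k$ with $k=-\zeta$. No differences worth noting.
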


For convenience we provide a proof here.

\begin{proof}
We fix $\xi\in T_{\phi(x)}N$. Condition (2) implies that there exists $\eta\in T_x\B$ satisfying $d\phi(x)\eta=\xi$. Condition (1)
implies that there exists $\zeta\in K\subset T_x\B$ satisfying $Ds(x)\zeta=Ds(x)\eta$. We set $\tau:=\eta-\zeta$ and compute
\beq
Ds(x)\tau=Ds(x)\eta-Ds(x)\zeta=0
\eeq
thus, $\tau\in\ker Ds(x)$. Moreover,
\beq
d\phi(x)\tau=d\phi(x)\eta-\underbrace{d\phi(x)\zeta}_{=0}=d\phi(x)\eta=\xi
\eeq
proving the Lemma.
\end{proof}

\bibliographystyle{amsalpha}
\bibliography{../../../Bibtex/bibtex_paper_list}
\end{document}